\documentclass[11pt]{article}
\usepackage{amsmath,amsfonts, amssymb,amsthm,multicol,graphicx,tikz, enumitem}
\usepackage{makeidx}
\usepackage{mleftright}
\usetikzlibrary{calc}

\DeclareMathOperator{\Var}{Var}

\DeclareMathOperator{\I}{(i)}
\DeclareMathOperator{\ii}{(ii)}
\DeclareMathOperator{\iii}{(iii)}

\newcommand{\ZZ}{\mathbb{Z}}
\newcommand{\RR}{\mathbb{R}} 
\newcommand{\NN}{\mathbb{N}} 

\newcommand{\EE}{\mathbb{E}}

\newtheorem{prop}{Proposition}   
\newtheorem{lem}[prop]{Lemma} 
\newtheorem{thm}[prop]{Theorem} 

\title{Concentration of the largest induced tree size of $G_{n,p}$ around the standard expectation threshold}
\author{Jakob Hofstad\footnote{ Institute for Computer Science, Heidelberg University, Germany. Email: \\hofstad@informatik.uni-heidelberg.de}}

\begin{document}

\maketitle

\begin{center}
    {\bf Abstract}
\end{center}

\noindent Let $T(G)$ be the size of the largest induced tree of $G$, and let $G_{n,p}$ be the binomial random graph. Kamaldinov, Skorkin, and Zhukovskii proved that $T(G_{n,p})$ equals one of two consecutive values with high probability if $p$ is constant, and more recently, Oropeza extended this result to include all vanishing $p$ such that $p > n^{-\frac{e-2}{3e-2} + \epsilon}$, where $e$ is Euler's constant. We further extend this result to all vanishing $p$ such that $p \gg n^{-1/2} \ln^{3/2} n$, and furthermore, we show that, for $p$ such that $n^{-1} \ll p \ll n^{-1/2}, \ T(G_{n,p})$ cannot be concentrated at the standard expectation threshold.

\section{Introduction}

Define $T(G)$ to be the maximum size of an induced tree of $G$. Let $G_{n,p}$ be the binomial random graph; i.e., a graph on $n$ vertices such that each pair of vertices is joined by an edge with probability $p$, and all $\binom{n}{2}$ of these events are independent. We say that an event $\mathcal{E}_n$ (dependent on $n$) happens ``with high probability", or ``whp", if $\lim_{n \to \infty} \Pr[\mathcal{E}_n] = 1$. Finally, we say that a variable $X$ defined on $G_{n,p}$ is ``concentrated on
$k = k(n)$ values" if there is a function $f(n)$ such that $X \in \{f(n), \dots ,f(n)+k-1\}$ whp.

The study of $T(G_{n,p})$ began with Erd\H{o}s and Palka, who proved that, if $p$ is a non-trivial constant, $T(G_{n,p}) \approx 2 \log_{1/(1-p)}n$ whp \cite{EZ-first-forest-result}. Kamaldinov, Skorkin, and Zhukovskii later sharpened this result: they proved that, for constant $p$, $T(G_{n,p})$ is concentrated on two values~\cite{KSZ-induced-sparse-graphs}. Oropeza then proved that two-point concentration of $T(G_{n,p})$ also holds for all functions $p = p(n)$ such that $p = o(1)$ and $p > n^{-\frac{e-2}{3e-2} + \epsilon}$ for any fixed $\epsilon > 0$ \cite{Oropeza-conc-of-induced-tree-prev}.

Closely related to the size of the maximum induced tree $T(G)$ is the independence number $\alpha(G)$, at least with regard to their behavior in $G_{n,p}$. 
The nature of $\alpha(G_{n,p})$ has been long studied; for constant $p$, two-point concentration was established independently by Erd\H{o}s and Bollob\'{a}s \cite{BE-Cliques} and by Matula \cite{Matula-cliques} in the 1970's, at the same asymptotic value of $2 \log_{1/(1-p)}n$. In 1990, Frieze \cite{Frieze-indep-asymptotic} used a combination of the second moment method and martingale inequalities to show that, if $p = o(1)$ and $p = \omega(1/n)$, then whp

\begin{equation*}
    \alpha(G_{n,p}) = \frac{2 \ln(np) + 2 + o(1)}{p}.
\end{equation*}

Much more recently, in a series of two papers \cite{BH-2point-Gnm, BH-2point-Gnp}, Bohman and the author proved that $\alpha(G_{n,p})$ is concentrated on two values if $p = \omega(n^{-2/3} \ln^{2/3} n)$, and moreover, that this order of $p$ is tight. Unlike the Erd\H{o}s-Bollob\'{a}s and Matula papers, these newer results required the use of a random variable that counts not independent sets themselves, but sets which are ``almost" independent but contain many independent sets within them.

Also closely related to $T(G)$ is
the maximum induced {\it forest} size $F(G)$; the study of $F(G_{n,p})$ has many close parallels with that of $T(G_{n,p})$ and $\alpha(G_{n,p})$.
Krivoshapko and Zhukovskii proved that, for constant $p$ ($p \not= 0$), $F(G_{n,p})$ is concentrated on the {\it same} two values as $T(G_{n,p})$ \cite{KZ-forests-p-constant}. Akhmejavona and Kozhevnikov \cite{AK-max-induced-forest} used techniques similar to the earlier result of Frieze to show that, if $p = o(1)$ and $p = \omega(1/n)$, then whp

\begin{equation*}
    F(G_{n,p}) = \frac{2 \ln(np) + 2 + o(1)}{p}.
\end{equation*}

In this paper, we prove that 2-point concentration of $T(G_{n,p})$ holds for a wide range of values of $p$. Defining $X_k$ as the number of induced graphs of size $k$ in $G_{n,p}$ which are trees, we also show that, for slightly smaller $p$, $T(G_{n,p})$ {\it cannot} be concentrated around the value of $k$ for which $\EE[X_k]$ transitions from being $\omega(1)$ to being $o(1)$, known as the ``expectation threshold".

\begin{thm} \label{thm: main}

There exists an integer function $k_0 = k_0(n,p)$ such that, for all $p = p(n)$ such that $\frac{1}{n} \ll p \ll \frac{1}{\ln^2 n}$:

\begin{enumerate}
    \item[$\I$] The largest value of $k$ such that $\EE[X_k]$ is at least 1 is either $k_0$ or $k_0 + 1$.

    \item[$\ii$] If $p = \omega(n^{-1/2} \ln^{3/2} n)$, then $T(G_{n,p}) \in \{k_0, k_0 + 1\}$ whp.

    \item[$\iii$] If $p = o(n^{-1/2})$, then $T(G_{n,p}) < k_0 - 1/(4np^2)$ whp.
\end{enumerate}
    
\end{thm}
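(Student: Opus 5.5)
The first moment is explicit. By Cayley's formula,
\[
\EE[X_k]=\binom{n}{k}k^{k-2}p^{k-1}(1-p)^{\binom{k}{2}-k+1}.
\]
I will take $k_0$ to be the integer at which this expression crosses from large to small, and read off part (i) from the ratio of consecutive values. For $k\approx 2\ln(np)/p$, Stirling gives
\[
\frac{\EE[X_{k+1}]}{\EE[X_k]}\approx \frac{n}{k}\,e\,kp\,(1-p)^{k-1}\approx \frac{enp}{(np)^2}\approx \frac{e}{np}\to 0.
\]
So $\EE[X_k]$ drops by a factor $\omega(1)$ at each step near the threshold. I then define $k_0$ so that $\EE[X_{k_0}]\ge 1$ is as large as possible subject to either $\EE[X_{k_0+1}]\ge 1$ or $\EE[X_{k_0+2}]<1$. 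This is essentially $k_0=$ (last $k$ with $\EE\ge1$) or that value minus one, chosen so that $\EE[X_{k_0}]=\omega(1)$ whenever possible. Part (i) follows, and the upper bound $T\le k_0+1$ in (ii) comes from Markov applied to $X_{k_0+2}$, whose expectation is $o(1)$.

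For the lower bound in (ii) I need $X_{k_0}>0$ whp, and I plan to use the second moment method. Write
\[
\frac{\EE[X_k^2]}{\EE[X_k]^2}=\sum_{\ell}\frac{\binom{k}{\ell}\binom{n-k}{k-\ell}}{\binom nk}\,R_\ell,
\]
where $R_\ell$ accounts for the common induced subgraph on $\ell$ vertices. That subgraph must be a forest induced in both trees, so one counts pairs of trees agreeing on an $\ell$-set. The contribution of the non-edges is $(1-p)^{-\binom{\ell}{2}+e(\text{forest})}$. The contribution of the tree edges is a ratio of tree counts containing a given forest with $c$ components; I would bound this with the standard formula for the number of spanning trees containing a fixed forest. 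I split into three ranges.
\begin{itemize}
\item[(a)] Small $\ell$: the ratio is $1+o(1)$, using $(1-p)^{-\ell^2/2}$ and $\ell^2 p$ small relative to the $\ell$-dependence of the binomial term.
\item[(b)] Middle $\ell$: the summand is $\exp(-\Omega(\ell\ln(np)))$, from concavity of the exponent.
\item[(c)] $\ell$ close to $k$: compare to $1/\EE[X_k]$. Here the danger is that a large tree shares most vertices with many others.
\end{itemize}
Since $\EE[X_{k_0}]$ may only be $\omega(1)$ barely, if Chebyshev is too weak I would switch to the Bohman--Hofstad style trick: count ``almost trees'' (e.g.\ $(k_0+1)$-sets spanning a tree plus structure), or use Talagrand/Janson on a modified variable. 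Note that $\frac{1}{n}\ll p\ll \ln^{-2}n$ keeps the Stirling error terms negligible.

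\textbf{The main obstacle is range (c).} There, the overlap term with $k-\ell=j$ small behaves like $\EE[X_k]^{-1}\cdot(\text{number of ways to swap } j \text{ vertices})$. Swapping one leaf of a tree for an outside vertex adjacent to exactly one tree vertex has expected count about $k\cdot n\cdot kp(1-p)^{k}\approx k^2p/(np)$. The requirement that this be $o(\EE[X_k])$, or rather that these swaps not dominate the variance, is exactly where $p\gg n^{-1/2}\ln^{3/2}n$ should enter. I would handle it by computing the $j=1,2$ terms exactly and bounding $j\ge3$ geometrically.

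For (iii), I plan to show that every induced tree of size about $k_0$ can be grown, so the first-moment threshold is not the truth. The reason is that when $p\ll n^{-1/2}$, a fixed large tree $T$ has about $n\cdot kp(1-p)^{k}\approx k/(np)\cdot p\cdot\ldots$ outside vertices with exactly one neighbor in $T$, and this count is large. Instead, I would argue directly via the first moment on a refined count. Take $X_{k,m}$ to be the number of induced trees of size $k$ having at most $m$ outside vertices with exactly one neighbor in the tree. Any maximal tree has none, since otherwise it extends. Concretely, if $T$ has size $k=k_0-1/(4np^2)$, the number $Y$ of pendant-extendable vertices is approximately $\mathrm{Bin}(n-k,\,kp(1-p)^{k-1})$ with mean $\mu\approx k(np)^{-1}\cdot e^{\ldots}$. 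Then
\[
\EE[\#\text{maximal trees of size } k]\le \EE[X_k]e^{-\mu(1-o(1))}.
\]
Near $k_0-t$ the ratio estimate gives $\EE[X_k]\approx (np/e)^{t}$ and $\mu$ is about $(np)^2p\,t$-ish; I would tune the constants so that $\mu$ beats $t\ln(np)$ for all $k\ge k_0-1/(4np^2)$. Summing over $k$ up to $k_0+1$ gives no maximal tree in that range whp, and the largest induced tree is maximal, so $T<k_0-1/(4np^2)$. Conditioning on the tree being induced only mildly changes the pendant count, and FKG/independence of the outside edges justifies the $e^{-\mu}$ bound.
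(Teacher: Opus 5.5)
\textbf{The gap is in the lower bound of (ii).} A second moment computation on $X_k$ itself cannot succeed in a large part of the range $p\gg n^{-1/2}\ln^{3/2}n$. Your plan for range (c), computing the $j=1,2$ terms exactly and bounding $j\ge 3$ geometrically, fails there.

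First, your leaf-swap count is off. A tree has about $k/e$ leaves. An outside vertex has exactly one neighbour among the remaining $k-1$ vertices with probability about $kp(1-p)^{k}\approx kp/(enp)^2$, since $kp\approx 2\ln(np)+2$. So the $j=1$, one-component overlap contributes about $\EE[X_k]^{-1}\lambda$, where $\lambda:=k^2/(e^3np)=\Theta(\ln^2(np)/(np^3))$. It is not $k^2p/(np)$.

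More seriously, swapping $j$ leaves for $j$ pendant outside vertices contributes about $\EE[X_k]^{-1}\lambda^j/(j!)^2$, valid while $pj^2=O(1)$. These terms \emph{increase} up to $j\approx\sqrt{\lambda}$, so their total is $\EE[X_k]^{-1}e^{(2-o(1))\sqrt{\lambda}}$. When $n^{-1/2}\ln^{3/2}n\ll p\ll n^{-1/3}$ (e.g.\ $p=n^{-0.4}$), we have $\sqrt{\lambda}\gg\ln n$, so this total is superpolynomial. On the other hand, any $k_0$ with $\EE[X_{k_0+2}]\to 0$ has $\EE[X_{k_0}]=o(n^2p^2)$. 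Hence $\Var[X_{k_0}]/\EE[X_{k_0}]^2\to\infty$. These are genuinely clustered configurations: rare trees with many pendant-extendable outside vertices. No choice of $k_0$ and no Janson-type bound on $X_k$ repairs this, and your fallback (``almost trees'', ``a modified variable'') is not specified.

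The missing idea is the paper's variable $Y_k$: induced $k$-trees in which \emph{every} outside vertex has at least three neighbours in the tree.
\begin{itemize}
\item It costs nothing in the first moment. An outside vertex has at most two tree-neighbours with probability $O(k^2p^2(1-p)^{k-2})=O(k^2/n^2)$, so $\EE[Y_k]=(1-O(k^2/n))\EE[X_k]$.
\item In the second moment, condition on $G[A]=T_A$ and $G[B]=T_B$. Each $a_i\in A\setminus B$ with $D(i)$ neighbours in $A\cap B$ must independently receive $3-D(i)$ neighbours in $B\setminus A$.
\item Since $\sum_i D(i)\le j+m-1$, where $m$ is the number of components of the common forest, this yields the factor $(jp)^{2j-m+1}$ of Lemma~\ref{lem: y less than x}.
\item For $m=1$ this turns $\lambda^j/(j!)^2$ into $(O(k^2p/n))^j$, killing the leaf swaps.
\end{itemize}
What survives are overlaps with $m$ near $2j+1$. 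An example is replacing a degree-$3$ vertex by an outside vertex with one neighbour in each of the three resulting components, which has order $k^3p/n$. \emph{That} is where $p\gg n^{-1/2}\ln^{3/2}n$ enters, not the leaf swaps. Because everything is driven by $m$, your $R_\ell$ has to be tracked jointly in $\ell$ and $m$. That requires a bound on $\sum\prod_i f_i^{f_i}/f_i!$ over compositions of $\ell$ into $m$ parts (the paper's Lemma~\ref{lem: hyperball}), which your sketch does not provide.

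Two smaller points.

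\emph{The choice of $k_0$.} As literally stated, your $k_0$ is just the last $k$ with $\EE[X_k]\ge 1$, and for that $k$ the expectation $\EE[X_{k_0}]$ need not tend to infinity. Use a threshold that is $\omega(1)$ and $o(np)$, e.g.\ $k_0=\max\{k:\EE[X_k]>\ln(np)\}$. Then $\EE[X_{k_0}]\to\infty$ and $\EE[X_{k_0+2}]\le(1+o(1))\ln(np)/(enp)\to 0$. Note the consecutive ratio is $\approx 1/(enp)$, not $e/(np)$.

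\emph{Part (iii).} Your route, a first moment on maximal induced trees, is exactly the paper's, but your $\mu$ is wrong. In fact $\mu=(n-k)kp(1-p)^{k-1}\approx k/(e^2np)\approx 2\ln(np)/(e^2np^2)$, essentially independent of $t=k_0-k$ (not ``$(np)^2p\,t$''). Also, the constant $1/4$ is fixed by the statement, not tunable. You need the gain $t\ln(enp)\le(1+o(1))\ln(np)/(4np^2)$ to lose to $\mu$, which amounts to $1/8<e^{-2}$; this does hold.
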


The theorem is proved by using two other random variables $Y_k$ and $W_k$. Both count induced trees of size $k$ but with restrictions: $Y_k$ counts the induced trees of size $k$ where vertices outside the tree have at least three neighbors inside the tree, while $W_k$ counts the induced trees of size $k$ which are maximal induced trees. The first and second moment methods with $Y_k$ are used to prove part (ii) of the theorem, and the first moment of $Y_k$ is used for part (iii).

The structure of the paper is as follows. Part (i) and the upper bound for $T(G_{n,p})$ in part (ii) will be proved in Section \ref{sec: 1st moment}. The lower bound for $T(G_{n,p})$ in part (ii) will be proved in Section \ref{sec: 2nd moment}. Part (iii) will be proved in Section \ref{sec: drift}. Section \ref{sec: closing remarks} contains closing remarks. An appendix contains two technical lemmas used in Section \ref{sec: 2nd moment}.

\subsection{Key definitions and inequality}

We close this section by defining some key notation and citing a strong form of Stirling's Inequality with the gamma function $\Gamma$. For a positive integer $n$, let $[n]$ the the set $\{1, 2, \dots, n\}$. For a graph $G$ and set $A \subset V(G)$, let $G[A]$ be the induced graph of $G$ on vertex set $A$. For a vertex $v$ in a graph $G$, let $\mathcal{N}(v)$ the set of the neighbors of $v$ in $G$.

A strong version of Stirling's Inequality is as follows (see (2.2) and (3.9) in \cite{Artin-gamma-funct}): for all positive real $n$,

    \begin{equation}
        \sqrt{2\pi} n^{n+1/2} e^{-n} \leq \Gamma(n+1) \leq \sqrt{2\pi} n^{n+1/2} e^{-n+1/(12n)}. \label{equ: Stirling's}
    \end{equation}

We mostly use the inequality above in combination with the fact that $\Gamma(n+1) = n!$ for all $n \in \NN$ (see (2.6) in \cite{Artin-gamma-funct}), though it will also be used for half-integer values of $n$.

\section{First moment calculation} \label{sec: 1st moment}

Throughout the rest of the paper, $\frac{1}{n} \ll p \ll \frac{1}{\ln^2 n}$ always.

Recall that $X_k$ is the number of induced subgraphs with size $k$ that are trees. The expected value of $X_k$ in $G_{n,p}$ is

\begin{equation} \label{equ: E[X]}
    \EE[X_k] = \binom{n}{k} k^{k-2} p^{k-1} (1-p)^{\binom{k}{2} - k +1}.
\end{equation}

We shall define

\begin{equation*}
    k_0 := \max\{k: \EE[X_k] > \ln(np)\};
\end{equation*}

this is certainly well-defined since $\EE[X_1] = n$ and $k \leq n$. If $k = \Theta(1)$ then (\ref{equ: E[X]}) will be $\Omega(n)$, and if $k = \Theta(n)$ (suppose $k > \varepsilon n$ for some $\varepsilon \in (0,1)$) then

\begin{equation*}
    \EE[X_k] \leq 2^n (\varepsilon np)^{\varepsilon n} e^{-pk(k-3)/2} \leq (2 (\varepsilon np)^{\varepsilon} e^{-\varepsilon^2 np + 3\varepsilon p})^n = o(1).
\end{equation*}

Therefore, it is sufficient to just consider $k$ which is both $\omega(1)$ and $o(n)$. Using (\ref{equ: Stirling's}), taking the $k$th root of (\ref{equ: E[X]}) gives

\begin{equation*}
    (\EE[X_k])^{1/k} \approx enp (1-p)^{k/2}.
\end{equation*}

This expression will be $\Theta(1)$ if and only if

\begin{equation*}
    k = 2\log_{1/(1-p)}(enp) + o(p^{-1});
\end{equation*}

Since $p = o(\ln^{-2} n)$, the above is equivalent to

\begin{equation}
    k = \frac{2 \ln(np) + 2 + o(1)}{p} \label{equ: k approx}
\end{equation}

and likewise equivalent to

\begin{equation}
    (1-p)^{k/2} \approx \exp\{-kp/2\} \approx \frac{1}{enp}. \label{equ: k usefuler approx}
\end{equation}

Moreover, a larger $k$ will make $\EE[X_k]$ be $o(1)$, and a smaller $k$ will make $\EE[X_k]$ be $\Omega(n)$, therefore $k_0$ must satisfy (\ref{equ: k approx}) and (\ref{equ: k usefuler approx}). For such a range of $k$,

\begin{equation} \label{equ: ratio over k}
\frac{\EE[X_{k+1}]}{\EE[X_k]} = (n-k) \left(\frac{k+1}{k}\right)^{k-2}p (1-p)^{k-1} \approx \frac{1}{enp},
\end{equation}

therefore $\EE[X_{k}] = o(1)$ for all $k \geq k_0 + 2$.\\

From this final statement, it follows that $k_0$ satisfies part (i) of Theorem \ref{thm: main}. Also, by Markov's Inequality on $X_{k_0+2}$, $T(G_{n,p})$ is at most $k_0+1$ whp, which is precisely the upper bound given in part (ii) of Theorem \ref{thm: main}.\\

\section{Second moment calculation} \label{sec: 2nd moment}

The goal of this Section is to verify the lower bound of part (ii) in Theorem \ref{thm: main}; i.e., to show that $T(G_{n,p}) \geq  k_0$ whp if $p = \omega(n^{-1/2} \ln^{3/2} n)$; hence, throughout this Section we assume that $p = \omega(n^{-1/2} \ln^{3/2} n)$.\\

We prove that $T(G_{n,p}) \geq k_0$ whp using the second moment method on the random variable $Y_k$ that counts the number of induced trees of size $k$ such that all vertices not in $Y$ have at least 3 neighbors in $Y$. We have (using (\ref{equ: k approx}))

\begin{align*}
    \frac{\EE[Y_k]}{\EE[X_k]}&=\left(1 - \sum_{i=0}^{2} \binom{k}{\ell}p^i(1-p)^{k-\ell}\right)^{n-k}\\&=
    (1-O(k^2/n^2))^{n-k} \\&=
    1 - O\left(\frac{k^2}{n}\right) \\&=
    1 - o(1).
\end{align*}

Since $\EE[X_{k_0}] = \omega(1)$ by definition, then $\EE[Y_{k_0}] = \omega(1)$ also. Therefore, to prove that $T(G_{n,p}) \geq k_0$ whp, by Chebyshev's Inequality, it is enough to prove the following Lemma, which we will do in the remainder of this Section:\\

\begin{lem} \label{lem: 2nd moment}

For $k$ satisfying (\ref{equ: k approx}) and (\ref{equ: k usefuler approx}) such that $\EE[X_k] = \omega(1)$:
    \begin{equation*}
    \frac{\Var[Y_k]}{\EE[Y_k]^2} = o(1).
\end{equation*}
\end{lem}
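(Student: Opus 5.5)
We will use the standard second-moment expansion. Write $Y_k=\sum_{S}\mathbf{1}_{A_S}$, where $S$ ranges over $k$-subsets of $[n]$ and $A_S$ is the event that $G[S]$ is a tree and every vertex outside $S$ has at least three neighbours in $S$. Then
\begin{equation*}
\frac{\Var[Y_k]}{\EE[Y_k]^2}\le \frac{1}{\EE[Y_k]}+\sum_{\ell=0}^{k-1}\frac{\sum_{|S\cap S'|=\ell}\Pr[A_S\cap A_{S'}]-\Pr[A_S]\Pr[A_{S'}]}{\EE[Y_k]^2}.
\end{equation*}
The first term is $o(1)$ because $\EE[Y_k]=\omega(1)$. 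For each $\ell$ we bound $\Pr[A_S\cap A_{S'}]$ by dropping most of the constraints and keeping only the useful ones.

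For $\ell$ in the middle and upper range we keep only ``$G[S]$ and $G[S']$ are induced trees''. The intersection $G[S\cap S']$ is then an induced forest on $\ell$ vertices with some number $c$ of components. Counting pairs of trees that agree on this forest, the number of edges of $G[S\cup S']$ is $2(k-1)-(\ell-c)$. We can then use standard bounds for the number of forests with $c$ components and for the number of ways to extend a forest to a spanning tree; the tree-extension count $\sum$ over forests is at most something like $k^{k-2}\cdot(\ell/k)^{\ell-c}\cdot\binom{\ell}{c}k^{c}$. This gives a ratio term
\begin{equation*}
f(\ell,c)\approx \frac{\binom{k}{\ell}\binom{n-k}{k-\ell}}{\binom nk}\,p^{-(\ell-c)}(1-p)^{-\binom{\ell}{2}+(\ell-c)}\cdot(\text{tree counts}),
\end{equation*}
which we sum over $c$.

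The key cancellation is that $(1-p)^{-\ell^2/2}\approx(enp)^{\ell^2/k}$ by (\ref{equ: k usefuler approx}), while $\binom{k}{\ell}\binom{n-k}{k-\ell}/\binom nk\approx (k^2/n\ell)^{\ell}$. With $c=1$ the $p^{-(\ell-1)}$ factor combines to roughly $(k^2 e/(np\ell))^\ell (enp)^{\ell^2/k}$. Since $k\approx 2\ln(np)/p$, this is exponentially small for $\ell\le(1-\delta)k$; for $\ell$ near $k$ we instead compare directly with $1/\EE[X_k]$-type bounds. So for large $\ell$ the argument is just the complement form of the same computation: with $m=k-\ell$, the term is roughly $\EE[X_k]^{-1}(\text{poly}\cdot nk\,p\,(enp)^{-1}\cdots)^m$. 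For small $m$ this alone is not small enough, because of the ratio (\ref{equ: ratio over k}) at the threshold.

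This is exactly where the condition on outside vertices enters. For $\ell$ close to $k$, each vertex of $S'\setminus S$ must have at least three neighbours in $S$, yet it has at most about one neighbour inside $S\cap S'$ along the tree $G[S']$. So it needs at least roughly two edges to $S\setminus S'$, which gives an extra factor of about $(mp)^2$ per vertex. Symmetrically for $S\setminus S'$. These extra factors kill the bad range. I expect this regime, with $m=k-\ell$ between $1$ and about $\varepsilon k$, to be the main obstacle; this is where the hypothesis $p\gg n^{-1/2}\ln^{3/2}n$ should be needed to make $k^3p^2\cdot$ stuff$/n$-type error terms small.

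For small $\ell$ (say $\ell\le \varepsilon/p$) we keep the full event only through independence. When $\ell\le1$ the two events depend on disjoint edge sets except for edges between $S\setminus S'$ and $S'\setminus S$ through the degree conditions. We then show that the correlation factor is $1+o(1)$ uniformly: the degree conditions change probabilities by $1-O(k^2/n)$ as in the computation of $\EE[Y_k]/\EE[X_k]$. For $2\le\ell\le\varepsilon/p$ the term is bounded by $(Ck^2/(n\ell))^{\ell}p^{-(\ell-c)}$ summed over $c$. Its dominant case $c=\ell$ gives $(Ck^2/n)^\ell/\ell!$, and since $k^2/n=\Theta(\ln^2(np)/(np^2))=o(1)$ under the hypothesis on $p$, the sum is $o(1)$. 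Combining the three ranges completes the proof, with the two technical counting estimates (forest extensions and the $c$-sum) deferred to an appendix-style lemma.
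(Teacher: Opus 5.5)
Your overall architecture is the paper's: split by $\ell=|S\cap S'|$ and by the number $c$ of components of the common forest, bound pairs of trees via forest extensions, and use the ``at least three neighbours'' condition only for $\ell$ close to $k$. But the step meant to handle small $j:=k-\ell$ is false in general, and the terms it overlooks are exactly the critical ones.

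A vertex of $S'\setminus S$ can have many $T_{S'}$-neighbours in $S\cap S'$. All one knows is that there are at most $j+c-1$ such edges in total, so the union bound over cross edges gives only the factor $(jp)^{2j-c+1}$ of Lemma~\ref{lem: y less than x}. This factor is vacuous once $c\ge 2j+1$. It is also vacuous once $jp\ge 1$, yet your ``bad range'' runs up to $j\approx\varepsilon k\gg 1/p$. Nor can you square the saving by also using the condition on $S\setminus S'$: both requirements are met by the same roughly $2j$ cross edges, so together they cost about $(jp)^{2j}$, not $(jp)^{4j}$.

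Concretely, take $j=1$ and $c=3$. The single vertex on each side joins the three components, both trees are induced, and both degree conditions hold automatically, so these pairs get no saving at all. The paper handles these terms as follows:
\begin{itemize}
\item For $j<1/(k^2p^3)$ and $c>2j$, it uses the cost of each extra component in the tree-pair count, roughly a factor $k^{1/2}j^2p/c^{3/2}\le (kp)^{-1/2}$.
\item For $j\ge 1/(k^2p^3)$, it uses the pure tree-pair bound optimized over $c$.
\end{itemize}
In the range $j<1/(k^2p^3)$ this yields $\sum_j (O(k^3p/n))^j$, and $k^3p/n=o(1)$ is precisely where $p\gg n^{-1/2}\ln^{3/2}n$ enters. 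Your plan has no mechanism for these terms. Your guessed bottleneck $k^3p^2/n$ is $o(1)$ for all $p\gg\ln^3 n/n$, so it cannot be where the hypothesis is needed.

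A related omission affects your lower and middle ranges. In your normalization, the tree-count factor is the number of pairs of trees agreeing on a $c$-component forest on $S\cap S'$, divided by $(k^{k-2})^2$. It is of order $(\ell/k^2)^{\ell-c}$ up to $e^{O(\ell)}$, and your displayed bounds drop the $(\ell/k^2)^{\ell-c}$. This factor turns $p^{-(\ell-c)}$ into roughly $(\ell/(k^2p))^{\ell-c}\le (kp)^{-(\ell-c)}$, and that is what makes $c=\ell$ the dominant term. As written, $(Ck^2/(n\ell))^{\ell}p^{-(\ell-c)}$ is maximized at $c=1$, contradicting your claim that $c=\ell$ dominates, and it is not $o(1)$: for $\ell=2$ it is of order $k^4/(n^2p)\to\infty$ when $p=n^{-1/2}\ln^2 n$. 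Likewise your $c=1$ expression $(ek^2/(np\ell))^{\ell}(enp)^{\ell^2/k}$ is exponentially large for the same $p$ and $\ell=k/10$.

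With the factor restored, the $c=1$ term is harmless, but then it is not the dominant one. So checking only $c=1$ for $\ell\le(1-\delta)k$ proves nothing. You need a bound on the number of tree pairs that is uniform in $c$ and in the component sizes. The paper gets it from the exact extension count of Lemma~\ref{lem: joining forests} combined with the estimate of $\sum\prod_i f_i^{f_i}/f_i!$ in Lemma~\ref{lem: hyperball}.
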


For ease of notation, we fix one such $k$ and write $X_k$ as $X$ and $Y_k$ as $Y$.

We start by expressing both $X$ and $Y$ as sums of indicator variables. For any set $A$ with $|A| = k$, and if $T_A$ is a tree on the vertices of $A$, define

\begin{align*}
    X_{A, T_A} &= \begin{cases}
        1 \text{ if } G_{n,p}[A] = T_A\\
        0 \text{ otherwise}
    \end{cases} \\
    Y_{A, T_A} &= \begin{cases}
        1 \text{ if } G_{n,p}[A] = T_A \text{ AND } \forall v \not\in A, \mathcal{N}(v) \cap A \geq 3\\
        0 \text{ otherwise}
    \end{cases}
\end{align*}

Let $\mathcal{S}$ be the set of all possible tuples $(A, T_A)$ for which $A \in \binom{[n]}{k}$ ad $T_A$ is a tree with vertex set $A$. Clearly, $\EE[X] = |\mathcal{S}|\cdot \EE[X_{A,T_A}]$ and $\EE[Y] = |\mathcal{S}| \cdot\EE[Y_{A,T_A}]$ for all $(A,T_A) \in \mathcal{S}$. Furthermore, since $\EE[Y] \approx \EE[X]$ (and hence $\EE[Y_{A,T_A}] \approx \EE[X_{A,T_A}]$), to prove Lemma \ref{lem: 2nd moment} it is enough to show that

\begin{equation} \label{equ: full covariance}
    \frac{1}{\EE[X]^2} \sum_{(A, T_A),(B, T_B) \in \mathcal{S}}  (\EE[Y_{A,T_A}Y_{B,T_B}] - \EE[Y_{A,T_A}]\EE[Y_{B,T_B}]) = o(1).
\end{equation}

As is standard with the second moment method, we will split the sum above based on the size of $|A \cap B|$. We will first consider the case where $|A \cap B| = 0$, the only case for which the $ - \EE[Y_{A,T_A}]\EE[Y_{B,T_B}]$ in the summand is necessary to use. We make use of the fact that, if $|A \cap B| = 0$, then $X_{A,T_A}$ and $X_{B,T_B}$ are independent events:

\begin{align*}
    \frac{1}{\EE[X]^2} &\sum_{\substack{(A, T_A),(B, T_B) \in \mathcal{S}:\\
    |A \cap B| = 0}}  (\EE[Y_{A,T_A}Y_{B,T_B}] - \EE[Y_{A,T_A}]\EE[Y_{B,T_B}]) \\=
    \frac{1}{|\mathcal{S}|^2}&\sum_{\substack{(A, T_A),(B, T_B) \in \mathcal{S}:\\
    |A \cap B| = 0}}  \frac{\EE[Y_{A,T_A}Y_{B,T_B}] - \EE[Y_{A,T_A}]\EE[Y_{B,T_B}]}{\EE[X_{A,T_A}]^2} \\ \leq
    \frac{1}{|\mathcal{S}|^2}&\sum_{\substack{(A, T_A),(B, T_B) \in \mathcal{S}:\\
    |A \cap B| = 0}}  \frac{\EE[X_{A,T_A}X_{B,T_B}] - (1-o(1))\EE[X_{A,T_A}]\EE[X_{B,T_B}]}{\EE[X_{A,T_A}]^2} \\=
   \frac{1}{|\mathcal{S}|^2}&\sum_{\substack{(A, T_A),(B, T_B) \in \mathcal{S}:\\
    |A \cap B| = 0}}  o(1) \\= o(1)  &.
\end{align*}

Furthermore, the sum above with the restriction $|A \cap B| = k$ instead is at most $\frac{\EE[Y]}{\EE[X]^2} = o(1)$, so it is enough to consider the case $|A \cap B| \in [k-1]$. Next, we define parameters to equal partial sums of (\ref{equ: full covariance}) dependent not only on a parameter $\ell := |A \cap B|$, but also on a parameter $m$, equal to the number of components of $T_A$ (likewise $T_B$) induced on $A \cap B$ (if they are not the same, then surely $\EE[Y_{A,T_A} Y_{B,T_B}] = 0$), which we simply write as $T_A \cap T_B$. Define

\begin{align}
x_{\ell,m} &:= \frac{1}{\EE[X]^2} \sum_{\substack{(A, T_A),(B, T_B) \in \mathcal{S}:\\
    |A \cap B| = \ell, \\
    T_A \cap T_B\text{ has $m$ components}}} \EE[X_{A,T_A}X_{B,T_B} ] \label{equ: x def} \\
y_{\ell,m} &:= \frac{1}{\EE[X]^2} \sum_{\substack{(A, T_A),(B, T_B) \in \mathcal{S}:\\
    |A \cap B| = \ell, \\
    T_A \cap T_B\text{ has $m$ components}}}  \EE[Y_{A,T_A}Y_{B,T_B} ] \label{equ: y def}
\end{align}

To prove Lemma \ref{lem: 2nd moment}, it is enough to show that

\begin{equation} \label{equ: sum of ys}
    \sum_{\ell=1}^{k-1} \sum_{m=1}^{\ell} y_{\ell,m} = o(1).
\end{equation}

Clearly $y_{\ell,m} \leq x_{\ell,m}$ by definition of $X_{A,T_A}$ and $Y_{A,T_A}$; the following lemma demonstrates where $y_{\ell,m}$ significantly differs from $x_{\ell,m}$:

\begin{lem} \label{lem: y less than x}
Let $\ell \in [k-1]$ and $m \in [\ell]$. If $(k-\ell)p < 1$ and $2(k-\ell) - m + 1 > 0$, then

\begin{equation*}
    y_{\ell,m} \leq x_{\ell,m}\cdot((k-\ell)p)^{2(k-\ell)-m+1}.
\end{equation*}

\begin{proof}
    Suppose that we have fixed sets $A,B$ and trees $T_A, T_B$ (which coincide on $A \cap B$) such that $|A \cap B| = \ell$ and there are $m$ components in $T_A \cap T_B$. We will condition $G_{n,p}$ on the event $\mathcal{E} := \{X_{A,T_A} X_{B,T_B} = 1\}$. Note that $\mathcal{E}$ only depends on the edges within $\binom{A}{2} \cup \binom{B}{2}$, so all pairs of vertices in $(A \backslash B, B \backslash A)$ appear as edges in $G_{n,p}$ (conditioned on $\mathcal{E}$) independently with probability $p$.\\

    Next, suppose that the vertices of $A \backslash B$ are $a_1, \dots, a_{k-\ell}$, and $D(i) := \mathcal{N}(a_i) \cap A \cap B$. Because $T_A[A \cap B]$ has $m$ components, there are at most $(k-\ell)+m-1$ edges from $A \backslash B$ to $A \cap B$, so $\sum_{i=1}^{k-\ell}D(i) \leq k-\ell+m-1$. If $Y_{B,T_B} = 1$, then each vertex $a_{i} \in A \backslash B$ has
    at least $\min\{3-D(i),0\}$ neighbors in $B \backslash A$ which, by union bound, happens with probability at most $\min\{((k-\ell)p)^{3 - D(i)},1\}$. Furthermore, these $k-\ell$ events are all independent of each other and independent of $\mathcal{E}$. Therefore:

\begin{align*}
    \Pr[Y_{A,T_A} Y_{B,T_B} = 1 \mid \mathcal{E}] &\leq \Pr[Y_{B,T_B} = 1 \mid \mathcal{E}] \\&\leq
    \prod_{i=1}^{k-\ell} \min\{((k-\ell)p)^{3 - D(i)},1\} \\&\leq
    \min\left\{\prod_{i=1}^{k-\ell} ((k-\ell)p)^{3-D(i)}, 1\right\} \\&\leq
    \min\{((k-\ell)p)^{3(k-\ell) - (k-\ell+m-1)},1\} \\&=
    ((k-\ell)p)^{2(k-\ell)-m+1}.
\end{align*}

Therefore $\Pr[Y_{A,T_A} Y_{B,T_B} = 1] \leq ((k-\ell)p)^{2(k-\ell)-m+1} \Pr[X_{A,T_A}X_{B,T_B} = 1]$, and since $(A,T_A)$ and $(B,T_B)$ were arbitrary such that $|A \cap B|=\ell$ and $T_A \cap T_B$ has $m$ components, then by (\ref{equ: x def}) and (\ref{equ: y def}) the Lemma holds.

\end{proof}

\end{lem}

\subsection{A general upper bound for $x_{\ell,m}$}

First, for fixed sets $A$ and $B$, we determine the number of possible trees $T_A$ and $T_B$ that coincide on $A \cap B$ such that the number of components in this overlap is $m$. We count in the following way: suppose that the vertex sets of the components of $T_A \cap T_B$ are $F_1, \dots, F_m$ with $f_{i} := |F_{i}|$ for all $i \in [m]$. Then:

\begin{enumerate}
    \item The total number of possibilities for $T_A \cap T_B$ is
        \begin{equation} \label{equ: trees}
        \prod_{i=1}^{m} f_i^{f_i - 2}. 
        \end{equation}

    \item The number of ways to complete the trees $T_A$ and $T_B$, by Lemma \ref{lem: joining forests} in the appendix, is 

        \begin{equation} \label{equ: completing trees} \left(\left(\prod_{i=1}^{m}f_i\right) k^{k-\ell-1} (k-\ell)^{m-1}\right)^2.
\end{equation}
\end{enumerate}

Therefore, by multiplying the expressions (\ref{equ: trees}) and (\ref{equ: completing trees}) with $\binom{\ell}{f_1, \dots, f_m}$, the number of ways to arrange $F_1, \dots, F_m$ in $A \cap B$, then enumerating over all possible $f_1, \dots, f_m$, then finally dividing by $m!$ to eliminate overcounting by permuting the labels of $F_1,\dots, F_{m}$, we have, for fixed $A$, $B$ with $|A \cap B| = \ell$:

\begin{align*}
&|T_A, T_B:
T_A \cap T_B\text{ has $m$ components}| \\=& \sum_{\substack{ f_1, \dots, f_m \in \ZZ^{+}:\\ f_1 + \dots + f_m = \ell}} \frac{\ell!}{m!}\left(\prod_{i=1}^{m} \frac{f_i^{f_i}}{f_i!}\right) k^{2k-2\ell-2} (k-\ell)^{2m-2}.
\end{align*}

Next, we use Lemma \ref{lem: hyperball} from the appendix to give a useful upper bound of the above expression:

\begin{align*}
&|T_A, T_B:
T_A \cap T_B\text{ has $m$ components}| \\&\leq
\left(\frac{e^{\ell} \ell^{(m-2)/2}}{2^{m/2}\Gamma(m/2)} \right)\frac{\ell!}{m!} k^{2k-2\ell-2} (k-\ell)^{2m-2} \\&=
k^{2k-2\ell-2} \left(\ell^{(m-2)/2} \ell! e^{\ell}\right)(k-\ell)^{2m-2} \left(\frac{1}{2^{m/2} \Gamma(m/2)m!}\right).
\end{align*}

Next, we multiply the above expression by $\binom{n}{k} \binom{k}{\ell} \binom{n-k}{k-\ell}$, the number of ways to choose sets $A$ and $B$ such that $|A \cap B| = \ell$:

\begin{align*}
    &|(A,T_A), (B,T_B) \in \mathcal{S}: |A \cap B| = \ell, \text{ and $T_A, T_B$ coincide on $A \cap B$ with $m$ components}| \\\leq&
    \binom{n}{k}\binom{n-k}{k-\ell} \frac{k!}{(k-\ell)!} k^{2k-2\ell-2} \left(\ell^{(m-2)/2} e^{\ell}\right)(k-\ell)^{2m-2} \left(\frac{1}{2^{m/2} \Gamma(m/2)m!}\right).
\end{align*}

Next, suppose that $T_A$ and $T_B$ coincide on $A \cap B$; then 
there are $2k-\ell+m-2$ edges in $T_A \cup T_B$, hence 
\begin{equation} \label{equ: edge probability}
    \EE[X_{A,T_A} X_{B,T_B}] = \left(\frac{p}{1-p}\right)^{2k-\ell+m-2} (1-p)^{2\binom{k}{2}-\binom{\ell}{2}}.
\end{equation}

Putting everything together with (\ref{equ: x def}) and recalling (\ref{equ: E[X]}): 

\begin{equation} \label{equ: x_{ell,m} upper bound}
    x_{\ell,m} \leq \frac{k! \binom{n-k}{k-\ell}}{(k-\ell)! \binom{n}{k}} k^{-2\ell+2}\left(\ell^{(m-2)/2} e^{\ell}\right)(k-\ell)^{2m-2} \left(\frac{1}{2^{m/2} \Gamma(m/2)m!}\right) \left(\frac{p}{1-p}\right)^{-\ell+m} (1-p)^{-\binom{\ell}{2}}.
\end{equation}

With a general upper bound of $x_{\ell,m}$, together with Lemma \ref{lem: y less than x}, we now work to bound the sum (\ref{equ: sum of ys}). We consider two cases, depending on whether $\ell$ is less or more than $k/2$.

\subsection{Case 1: $\ell \leq k/2$}

Here we show

\begin{equation*}
    \sum_{\ell=1}^{\lfloor k/2 \rfloor} \sum_{m=1}^{\ell} x_{\ell,m} = o(1);
\end{equation*}
indeed, it is sufficient to just use the parameter $x_{\ell,m}$ instead of $y_{\ell,m}$ for this range of $\ell$.

First, note that 

\begin{equation*}
    \binom{n-k}{k-\ell} \leq \binom{n}{k-\ell} = \binom{n}{k} \frac{k(k-1) \dots (k-\ell+1)}{(n-k+1)(n-k+2) \dots (n-k+\ell)} \leq \binom{n}{k} \left(\frac{k}{n-k}\right)^{\ell}.
\end{equation*}

Therefore, by (\ref{equ: x_{ell,m} upper bound}):

\begin{align*}
    x_{\ell,m} &\leq \left(\frac{3 e^{\ell p/2}}{np}\right)^\ell \left(\frac{1}{2^{m/2} \Gamma(m/2)m!}\right) \left(2k^2p\ell^{0.5}\right)^{m}\\&=
    \left(\Theta(1)\frac{e^{\ell p/2}}{np}\right)^\ell \left(\Theta(1)\frac{k^2p\ell^{0.5}}{m^{1.5}}\right)^{m} \qquad \text{by (\ref{equ: Stirling's}).}
\end{align*}

Since, for all $\ell < k/2$ and $m < \ell$,  $\left(k^2p\ell^{0.5}m^{-1.5}\right)^{m}$ increases by a multiplicative factor of $\Omega(kp) \gg 1$ when $m$ increases by 1, then the sum is dominated by the case $m=\ell$:

\begin{align*}
    \sum_{\ell=1}^{\lfloor k/2 \rfloor} \sum_{m=1}^{\ell} x_{\ell,m} &= \sum_{\ell=1}^{\lfloor k/2 \rfloor} \left(\Theta(1)\frac{e^{\ell p/2}}{np}\right)^\ell \left(\Theta(1)\frac{k^2p\ell^{0.5}}{\ell^{1.5}}\right)^{\ell} \\&=
    \sum_{\ell=1}^{\lfloor k/2 \rfloor} \left(\Theta(1)\frac{e^{\ell p/2}}{\ell}\frac{k^2}{n}\right)^{\ell}.
\end{align*}

Since (by (\ref{equ: k usefuler approx}))

\begin{align*}
    \ell < \frac{1}{p} &\Longrightarrow \frac{e^{\ell p/2}}{\ell} = O(1/\ell) = O(1)
    \\ \ell \in [1/p, k/2] &\Longrightarrow \frac{e^{\ell p/2}}{\ell} = O(e^{kp/4}p) = O(n^{0.5}p^{1.5}),
\end{align*}

then (by (\ref{equ: k approx}))

\begin{align*}
    \sum_{\ell=1}^{\lfloor k/2 \rfloor} \sum_{m=1}^{\ell} x_{\ell,m} &= 
    \sum_{\ell=1}^{\lfloor k/2 \rfloor} \left(O(1 + n^{0.5} p^{1.5})\frac{k^2}{n}\right)^{\ell} \\&=
    \sum_{\ell=1}^{\lfloor k/2 \rfloor} \left(O\left(\frac{k^2}{n} + \frac{k^2p^{1.5}}{n^{0.5}} \right)\right)^{\ell}\\&=
    o(1).
\end{align*}

\subsection{Case 2: $\ell > k/2$}

Let $j := k-\ell$ for simplicity. We show

\begin{equation*}
    \sum_{j=1}^{\lfloor k/2 \rfloor} \sum_{m=1}^{k-j} y_{k-j,m} = o(1).
\end{equation*}

By (\ref{equ: x_{ell,m} upper bound}) (and recalling (\ref{equ: E[X]}) again),

\begin{align}
    x_{k-j,m} &\leq \frac{1}{\EE[X]} \frac{k! \binom{n-k}{j}}{j!} k^{2j-k}\left(k^{(m-2)/2} e^{k-j}\right)j^{2m-2} \left(\frac{1}{2^{m/2} \Gamma(m/2)m!}\right) \left(\frac{p}{1-p}\right)^{j+m-1} (1-p)^{\binom{k}{2}-\binom{k-j}{2}} \nonumber \\&=
    \frac{1}{\EE[X]}\left(\frac{\binom{n-k}{j}}{j!}(1-p)^{(k-j)j + \binom{j}{2}}\right)\left(\frac{k^{2} p}{e(1-p)} \right)^j \left(\frac{1}{2^{m/2} \Gamma(m/2)m!}\right) \left(\frac{k^{0.5}j^2 p}{1-p}\right)^{m-1} \frac{k!e^k}{k^{k+0.5}} \nonumber \\&\leq
     \left(\frac{1}{(j!)^2}(1-p)^{(k-j)j + \binom{j}{2}}\right)\left(\frac{n k^{2} p}{e(1-p)} \right)^j \left(\frac{1}{2^{m/2} \Gamma(m/2)m!}\right) \left(\frac{k^{0.5}j^2 p}{1-p}\right)^{m-1} \frac{k!e^k}{k^{k+0.5}} \nonumber \\&=
      \left(\Theta(1)\frac{n k^{2} p (1-p)^{k} e^{jp/2}}{j^2} \right)^j \left(\Theta(1)\frac{k^{0.5}j^2 p}{m^{1.5}}\right)^{m-1} 
    \qquad \text{by (\ref{equ: Stirling's})} \nonumber \\&=
      \left(\Theta(1)\frac{k^{2} e^{jp/2}}{npj^2} \right)^j \left(\Theta(1)\frac{k^{0.5}j^2 p}{m^{1.5}}\right)^{m-1} 
    \qquad \text{by (\ref{equ: k usefuler approx}).} \label{equ: x when i big}
\end{align}

We now split into three subcases:

\subsubsection{Subcase 1: $j < \frac{1}{k^2 p^3}$ and $m \leq 2j$}

Note that $j < \frac{1}{k^2 p^3} < \frac{1}{p}$, therefore we can use Lemma \ref{lem: y less than x} and also $e^{jp/2} = \Theta(1)$. We have, by Lemma \ref{lem: y less than x} and (\ref{equ: x when i big}):

\begin{align*}
     \sum_{j=1}^{\lfloor k^{-2} p^{-3} \rfloor} \sum_{m=1}^{2j} y_{k-j,m} &\leq  \sum_{j=1}^{\lfloor k^{-2} p^{-3} \rfloor} \sum_{m=1}^{2j} x_{k-j,m}*(jp)^{2j-m+1}\\&\leq
     \sum_{j=1}^{\lfloor k^{-2} p^{-3} \rfloor} \sum_{m=1}^{2j}   \left(\Theta(1)\frac{k^{2}}{npj^2} \right)^j \left(\Theta(1)\frac{k^{0.5}j^2 p}{m^{1.5}}\right)^{m-1}(jp)^{2j-m+1} \\&=
      \sum_{j=1}^{\lfloor k^{-2} p^{-3} \rfloor} \sum_{m=1}^{2j}  \left(\Theta(1)\frac{k^{2} p}{n} \right)^j \left(\Theta(1)\frac{k^{0.5}j}{m^{1.5}}\right)^{m-1}\\&\leq
      \sum_{j=1}^{\lfloor k^{-2} p^{-3} \rfloor} \sum_{m=1}^{2j}  \left(\Theta(1)\frac{k^{2} p }{n} \right)^j \left(\Theta(1)\frac{k^{0.5}}{m^{0.5}}\right)^{m-1}.
\end{align*}

Since, for $m < 2j < 2/p$, $(k/m)^{0.5(m-1)}$ increases by a multiplicative factor of $\Omega((kp)^{0.5}) \gg 1$ when $m$ increases by 1, the sum above is dominated by the case $m=2j$:

\begin{align*}
     \sum_{j=1}^{\lfloor k^{-2} p^{-3} \rfloor} \sum_{m=1}^{2j} y_{k-j,m} &= 
      \sum_{j=1}^{\lfloor k^{-2} p^{-3} \rfloor}   \left(\Theta(1)\frac{k^{2} p }{n} \right)^j \left(\Theta(1)\frac{k^{0.5}}{(2j)^{0.5}}\right)^{2j-1} \\&\leq
      \sum_{j=1}^{\lfloor k^{-2} p^{-3} \rfloor}   \left(\Theta(1)\frac{k^{3} p }{n} \right)^j \\&=
    o(1) \qquad \text{by (\ref{equ: k approx}).}
\end{align*}

We note that the last comparison is true only because $\frac{k^3p}{n} = o(1)$, and this is indeed the crucial bottleneck that requires $p \gg n^{-1/2} \ln^{3/2} n$ (strictly speaking, the case where $m = 2j+1$ is the tightest). \\

\subsubsection{Subcase 2: $j < \frac{1}{k^2 p^3}$ and $m > 2j$}

Again $e^{jp/2} = \Theta(1)$. By (\ref{equ: x when i big}),

\begin{align*}
\sum_{j=1}^{\lfloor k^{-2} p^{-3} \rfloor}\sum_{m=2j+1}^{k-j} x_{k-j,m} &\leq 
\sum_{j=1}^{\lfloor k^{-2} p^{-3} \rfloor}\sum_{m=2j+1}^{k-j}   \left(\Theta(1)\frac{k^{2} e^{jp/2}}{npj^2} \right)^j \left(\Theta(1)\frac{k^{0.5}j^2 p}{m^{1.5}}\right)^{m-1}\\&=
\sum_{j=1}^{\lfloor k^{-2} p^{-3} \rfloor}\sum_{m=2j+1}^{k-j}  \left(\Theta(1)\frac{k^{3}p j^2 }{n m^{3}} \right)^j \left(\Theta(1)\frac{k^{0.5}j^2 p}{m^{1.5}}\right)^{-2j+m-1} \\&\leq
\sum_{j=1}^{\lfloor k^{-2} p^{-3} \rfloor}\sum_{m=2j+1}^{k-j}  \left(\Theta(1)\frac{k^{3}p}{n m} \right)^j \left(\Theta(1)k^{0.5}j^{0.5} p\right)^{-2j+m-1} \\&\leq
\sum_{j=1}^{\lfloor k^{-2} p^{-3} \rfloor}\sum_{m=2j+1}^{k-j}  \left(\Theta(1)\frac{k^{3}p}{n} \right)^j \left(\Theta(1)(kp)^{-0.5}\right)^{-2j+m-1}.
\end{align*}

By (\ref{equ: k approx}), the sum over $m$ above is dominated by $m = 2j-1$:

\begin{align*}
\sum_{j=1}^{\lfloor k^{-2} p^{-3} \rfloor}\sum_{m=2j+1}^{k-j} x_{k-j,m} &\leq 
\sum_{j=1}^{\lfloor k^{-2} p^{-3} \rfloor}  \left(\Theta(1)\frac{k^{3}p}{n} \right)^j \\&=
o(1).
\end{align*}

\subsubsection{Subcase 3: $j \geq \frac{1}{k^2 p^3}$}

By (\ref{equ: k usefuler approx}) and (\ref{equ: x when i big}):

\begin{align*}
\sum_{j= \lceil k^{-2}p^{-3} \rceil}^{\lfloor k/2 \rfloor}\sum_{m=1}^{k-j} x_{k-j,m} &\leq \sum_{j= \lceil k^{-2}p^{-3} \rceil}^{\lfloor k/2 \rfloor}\sum_{m=1}^{k-j}   \left(\Theta(1)\frac{k^{2} e^{jp/2}}{npj^2} \right)^j \left(\Theta(1)\frac{k^{0.5}j^2 p}{m^{1.5}}\right)^{m-1} \\&\leq
\sum_{j= \lceil k^{-2}p^{-3} \rceil}^{\lfloor k/2 \rfloor}\sum_{m=1}^{k-j}   \left(\Theta(1)\frac{k^{2} e^{(k/2)p/2}}{np(k^{-2}p^{-3})^2} \right)^j \left(\Theta(1)\frac{k^{0.5}j^2 p}{m^{1.5}}\right)^{m-1}\\&=
\sum_{j= \lceil k^{-2}p^{-3} \rceil}^{\lfloor k/2 \rfloor}\sum_{m=1}^{k-j}   \left(\Theta(1)\frac{k^6 p^{5.5}}{n^{0.5}} \right)^j \left(\Theta(1)\frac{k^{1/3}j^{4/3} p^{2/3}}{m}\right)^{1.5(m-1)} 
\end{align*}

It is a straightforward calculus exercise to verify that, with $j$ fixed, the value of $m$ which maximizes a summand in the expression immediately above is $\Theta(k^{1/3} j^{4/3} p^{2/3})$, and hence, by (\ref{equ: k approx}):

\begin{align*}
\sum_{j= \lceil k^{-2}p^{-3} \rceil}^{\lfloor k/2 \rfloor}\sum_{m=1}^{k-j} x_{k-j,m} &\leq \sum_{j= \lceil k^{-2}p^{-3} \rceil}^{\lfloor k/2 \rfloor}k\left(\Theta(1)\frac{k^6 p^{5.5}}{n^{0.5}} \right)^j \exp\{\Theta(k^{1/3} j^{4/3} p^{2/3})\} \\&= 
\sum_{j= \lceil k^{-2}p^{-3} \rceil}^{\lfloor k/2 \rfloor}k \exp\{-\Theta(j\ln n)\} \exp\{\Theta(k^{1/3} j^{4/3} p^{2/3})\} \\&\leq
\sum_{j= \lceil k^{-2}p^{-3} \rceil}^{\lfloor k/2 \rfloor}\exp\{\ln n-\Theta(j\ln n) + \Theta(j\ln^{2/3} n)\} \\&=
o(1).
\end{align*}

This verifies (\ref{equ: sum of ys}), proving Lemma \ref{lem: 2nd moment}, therefore (ii) of Theorem \ref{thm: main} holds. 

\subsection{Closing remark}

Before closing this section, we remark how the range of $p$ is indeed tight. Consider the case where $k - \ell = j = 1$ and $m=3$. With a careful analysis of this Section as well as Lemma \ref{lem: hyperball}, one can show
that

\begin{equation*}
    y_{k-1,3} = \Omega \left(\frac{1}{\EE[Y]} \frac{k^3 p}{n}\right).
\end{equation*}

If $p = o(n^{-1/2} \ln^{3/2} n) \Longleftrightarrow \frac{k^3p}{n} = \omega(1)$, then Lemma \ref{lem: 2nd moment} no longer holds. \\

One could prove Lemma \ref{lem: 2nd moment} holds for slightly smaller $p$ if $Y_k$ instead counted induced $k$-trees where each vertex outside has at least 4 neighbors inside (instead of at least 3). However, if $p = o(n^{-1/2} \ln^{3/2} n) \Longleftrightarrow \frac{k^3p}{n} = \omega(1)$, then the expectation of this modified variable would {\it not} be asymptotically equal to $\EE[X_k]$. This ``drift" of expectation values will be further exploited in the next Section.

\section{Drift from the expectation threshold for small $p$} \label{sec: drift}

Here, we consider a third random variable. Let $W_k$ be the number of {\it maximal} induced trees with size $k$. An induced tree is a maximal induced tree if and only if no vertex outside has exactly one neighbor inside. Thus,

\begin{equation*}
    \frac{\EE[W_k]}{\EE[X_k]} = \left(1 - kp(1-p)^{k-1}\right)^{n-k}.
\end{equation*}

When $p = o(n^{-1/2})$, this ratio is small enough that it can be used to prove part (iii) of Theorem \ref{thm: main}.

\begin{proof}[Proof of (iii) of Theorem \ref{thm: main}]
    It is enough to show that whp $T(G_{n,p}) \not\in [k_0 - 1/(4np^2), k_0+1]$, since we already noted that $T(G_{n,p}) \leq k_0+1$ by Markov's Inequality using variable $X_{k_0+2}$ in Section \ref{sec: 1st moment}. Note that, for all $k \in [k_0 - 1/(4np^2), k_0+1]$, it satisfies (\ref{equ: k approx}) and (\ref{equ: k usefuler approx}).\\

    If $T(G_{n,p}) = k$, then certainly $W_k \geq 1$. Therefore, by Markov's Inequality, and using (\ref{equ: k approx}), (\ref{equ: k usefuler approx}), and (\ref{equ: ratio over k}):

    \begin{align*}
        &\Pr[T(G_{n,p}) \in [k_0 - 1/(4np^2), k_0+1]] \\&\leq \sum_{k = \lceil k_0 - 1/(4np^2) \rceil}^{k_0+1} \EE[W_k] \\&=
        \sum_{k = \lceil k_0 - 1/(4np^2) \rceil}^{k_0+1} \left(1 - kp(1-p)^{k-1}\right)^{n-k} \EE[X_k] \\&=
        \sum_{k = \lceil k_0 - 1/(4np^2) \rceil}^{k_0+1} \exp\{(-1+o(1))k_0 e^{-2}n^{-1} p^{-1}\} \EE[X_k] \\&=
        \exp\{(-1+o(1))k_0 e^{-2}n^{-1} p^{-1}\} \EE[X_{k_0+2}] \sum_{k = \lceil k_0 - 1/(4np^2) \rceil}^{k_0+1} \prod_{j=k}^{k_0+1} \frac{\EE[X_{j}]}{\EE[X_{j+1}]} \\&\leq
        \exp\{(-1+o(1))k_0 e^{-2}n^{-1} p^{-1}\} \left(3np\right)^{k_0 - (k_0 - 1/(4np^2)) + 2} \\&=
        \exp\{(-1+o(1))k_0 e^{-2}n^{-1} p^{-1} + (1+o(1))(k_0 p/2)/(4np^2) \} \\&=
        \exp\{(1/8-e^{-2}+o(1))k_0 n^{-1} p^{-1}\} \\&=
        o(1).
    \end{align*}
    
\end{proof}

\section{Closing remarks} \label{sec: closing remarks}

To prove two-point concentration for smaller $p$, one would need to consider a random variable that does not simply count induced trees of size $k$ with restrictions. Instead, the likely way forward would be to count sets with $k + \ell$ vertices, called ``clusters", which contain multiple induced trees of size $k$ inside. This kind of clustering technique was used by Bohman and the author \cite{BH-2point-Gnp} to prove two-point concentration of the independence number of $G_{n,p}$ for values of $p$ too small to just count independent sets. It appears that the ideal cluster structure for trees for $p$ slightly smaller than $n^{-1/2} \ln^{3/2} n$ is one whose 2-core, after contracting vertices of degree 2 (with respect to the $k$-core), is a random 3-regular multigraph, with a restriction for how many neighbors a vertex outside can have inside the cluster, as well as a restriction on what subset of the vertices of degree 3 in the 2-core can be cut vertices for the entire cluster. Even the first moment calculations needed here are surprisingly difficult. \\

One could also ask whether the above results can be modified to account for induced forests instead of induced trees. For constant $p$, the problem is simple because a significant proportion of the maximum induced forests are trees, such that one can perform second moment calculations on just trees and still derive 2-point concentration for forests; this is precisely what was done in \cite{KZ-forests-p-constant}. However, if $p = o(1)$, then it can be shown that the typical maximum induced forest of size $k$ contains a tree that covers all but roughly $1/p$ of the vertices in the forest. The remainder of the forest will behave like $G(1/p,p)$ conditioned on there being no cycles. Understanding the behavior of this remainder will likely involve a detailed analysis in the critical threshold of $G_{n,p}$ where a giant component emerges; specifically, where $p = \frac{1}{n} + O(n^{-4/3})$. This is required just to perform the first-moment calculations.

\vspace{1cm}

\appendix

\noindent{\large \bf Appendix: two technical lemmas} \\

The first lemma stated here is used for enumerating spanning trees covering a given forest with certain edge restrictions, proved in a more general form in \cite{AK-max-induced-forest}.

\begin{lem} \label{lem: joining forests}
    Suppose that $G$ is a forest with vertex set $[k]$, where $m$ of the components are trees labeled $F_1, \dots, F_m$, and the rest of the components are isolated vertices (a subtree $F_i$ could also be an isolated vertex). Let $f_i := |V(F_i)|$ and $\ell := \sum_{i=1}^{m} f_i$. Then the number of spanning trees $T$ on vertex set $[k]$ such that $G$ is a subgraph of $T$ and no edge in $T$ joins a subtree $F_i$ with another subtree $F_j$ is

\begin{equation*}
      \left(\prod_{i=1}^{m} f_i\right)k^{k-\ell-1}(k-\ell)^{m-1} .
\end{equation*}

\begin{proof}
    Use equation (16) from \cite{AK-max-induced-forest} with $h=1$ and the Binomial Theorem to derive the number of spanning trees $T$ but with a designated root; then divide by $k$ to get the result. \end{proof}

\end{lem}

The next lemma is used for bounding a sum that arises in the second moment calculations of Section \ref{sec: 2nd moment}. It uses several properties of Euler's Gamma function $\Gamma$, including (\ref{equ: Stirling's}) as well as two other properties which we state here (see (2.14) in \cite{Artin-gamma-funct} for (i) and \cite{Huber-volume-n-ball} for (ii)):

\begin{enumerate}[label=\roman*.]
    \item[(i)] $\Gamma(1/2) = \sqrt{\pi}$.

    \item[(ii)] Let $\mathcal{L}_{\RR^d}$ be the Lebesgue measure in $\RR^d$, and $\mathcal{B}(R)$ denote the ball with radius $R$. Then, for all $R > 0$ and $d \in \ZZ^+$:

    \begin{equation} \label{equ: ball hypervolume}
        \mathcal{L}_{\RR^d}(\mathcal{B}(R)) = \frac{(R \pi^{1/2})^{d}}{\Gamma(d/2+1)}.
    \end{equation}

\end{enumerate}

\begin{lem} \label{lem: hyperball}
    For all positive integers $\ell$ and $m$:

\begin{equation*}
\sum_{\substack{ f_1, \dots, f_m \in \ZZ^{+}:\\ f_1 + \dots + f_m = \ell}} \  \prod_{i=1}^{m}\frac{f_i^{f_i}}{f_i!} \leq \frac{e^{\ell} \ell^{(m-2)/2}}{2^{m/2} \Gamma(m/2)}.
\end{equation*}

\end{lem}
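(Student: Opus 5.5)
We bound each factor by Stirling and then convert the lattice sum into an integral over a simplex that can be compared with a ball volume. By (\ref{equ: Stirling's}), $f^f/f! \le e^{f}/\sqrt{2\pi f}$ for every positive integer $f$. Hence each summand is at most $e^{\ell}\prod_{i=1}^m (2\pi f_i)^{-1/2}$, since $\sum f_i=\ell$. It therefore suffices to show
\begin{equation*}
S_m(\ell):=\sum_{\substack{f_1,\dots,f_m\in\ZZ^+\\ f_1+\dots+f_m=\ell}}\prod_{i=1}^m f_i^{-1/2}\le \frac{\pi^{m/2}\,\ell^{(m-2)/2}}{\Gamma(m/2)}.
\end{equation*}
Multiplying by $(2\pi)^{-m/2}$ then gives exactly $e^\ell \ell^{(m-2)/2}/(2^{m/2}\Gamma(m/2))$.

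The natural route is to compare $S_m(\ell)$ with the Dirichlet integral
\begin{equation*}
\int_{x_1+\dots+x_m=\ell,\ x_i>0}\prod_i x_i^{-1/2}\,d\sigma=\frac{\Gamma(1/2)^m}{\Gamma(m/2)}\,\ell^{m/2-1}=\frac{\pi^{m/2}\ell^{(m-2)/2}}{\Gamma(m/2)},
\end{equation*}
where $d\sigma$ is the measure $dx_1\cdots dx_{m-1}$. This is precisely the target. In keeping with the paper's hint, I would derive it geometrically. Substituting $x_i=y_i^2$ turns $\prod x_i^{-1/2}\,dx$ into $2^m\,dy$. So $\int_{\sum x_i\le \ell}\prod x_i^{-1/2}\,dx$ equals $2^m$ times the volume of the positive orthant of the ball of radius $\sqrt\ell$, which is the full ball volume $\pi^{m/2}\ell^{m/2}/\Gamma(m/2+1)$ by (\ref{equ: ball hypervolume}). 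Differentiating in $\ell$ yields the slice integral, and $\Gamma(1/2)=\sqrt\pi$ is used for consistency.

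The discrete-to-continuous comparison I would do by induction on $m$, which avoids multidimensional monotonicity arguments. For $m=1$ we have $S_1(\ell)=\ell^{-1/2}=\pi^{1/2}\ell^{-1/2}/\Gamma(1/2)$, with equality. For the step,
\begin{equation*}
S_{m+1}(\ell)=\sum_{f=1}^{\ell-m} f^{-1/2}S_m(\ell-f)\le \frac{\pi^{m/2}}{\Gamma(m/2)}\sum_{f=1}^{\ell-1} f^{-1/2}(\ell-f)^{(m-2)/2}.
\end{equation*}
We then need
\begin{equation*}
\sum_{f=1}^{\ell-1} f^{-1/2}(\ell-f)^{(m-2)/2}\le \int_0^\ell x^{-1/2}(\ell-x)^{(m-2)/2}\,dx=B\!\left(\tfrac12,\tfrac m2\right)\ell^{(m-1)/2}.
\end{equation*}
Given that, $\pi^{m/2}B(1/2,m/2)/\Gamma(m/2)=\pi^{(m+1)/2}/\Gamma((m+1)/2)$ closes the induction.

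The main obstacle is this one-dimensional sum-versus-integral inequality. For $m\ge2$ the function $x\mapsto x^{-1/2}(\ell-x)^{(m-2)/2}$ is not monotone in general, so bounding each term by the integral over an adjacent unit interval needs care. My plan is to split the integrand into the decreasing factor $x^{-1/2}$ and the nonincreasing factor $(\ell-x)^{(m-2)/2}$ (for $m\ge2$). On $[f-1,f]$ I would use $f^{-1/2}\le x^{-1/2}$ but need $(\ell-f)^{(m-2)/2}\le(\ell-x)^{(m-2)/2}$, which holds since $x\le f$. Thus each summand is at most $\int_{f-1}^{f}x^{-1/2}(\ell-x)^{(m-2)/2}\,dx$, and both factors are favorable on the left interval. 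Summing over $f=1,\dots,\ell-1$ gives the bound by $\int_0^{\ell-1}\le\int_0^\ell$.

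The only delicate case is $m=1$ in the step, where $(\ell-x)^{-1/2}$ is increasing. There I would instead handle $S_2(\ell)=\sum_{f=1}^{\ell-1} f^{-1/2}(\ell-f)^{-1/2}\le\pi$ directly. By the symmetry $f\leftrightarrow \ell-f$, it suffices to bound the half-sum over $f\le\ell/2$ by the integral over $[0,\ell/2]$, using the left-interval comparison on the decreasing part. Alternatively, the integrand $x^{-1/2}(\ell-x)^{-1/2}$ is convex, so a midpoint-type comparison gives $\sum\le\int_{1/2}^{\ell-1/2}\le\pi$. After that, the induction proceeds for all $m\ge2$ as above.
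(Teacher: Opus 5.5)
Your proof is correct, but it is organized differently from the paper's. After the same Stirling reduction, the paper does not induct. It treats $m=1$ and $m=2$ directly; the latter uses the same Riemann-sum comparison with $\int_0^\ell (x(\ell-x))^{-1/2}\,dx=\pi$ that you propose. For $m\ge 3$ it peels off the last \emph{two} coordinates at once and bounds their inner sum by $\pi$ uniformly. It then relaxes the remaining $m-2$ coordinates to the solid region $f_1+\dots+f_{m-2}\le \ell$ and compares with a single $(m-2)$-dimensional integral. The substitution $x_i=y_i^2$ turns that integral into the volume of a ball of radius $\sqrt{\ell}$ in $\RR^{m-2}$.

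You instead peel off one coordinate per step, so every comparison is one-dimensional. You close the recursion with $B(1/2,m/2)=\Gamma(1/2)\Gamma(m/2)/\Gamma((m+1)/2)$. The trade-off is this: the paper needs only the ball-volume formula and one (coordinatewise monotone, hence easy) multidimensional Riemann comparison. You avoid the multidimensional comparison but rely on the Beta--Gamma identity. Your geometric/Dirichlet paragraph is motivation only and is not actually used in your induction.

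Both routes give exactly the same constant, $\pi^{m/2}\ell^{(m-2)/2}/\Gamma(m/2)$. The paper's replacement of the two-coordinate sum by $\pi$ costs nothing in the constant, because the two-variable slice integral $\int_0^s (x(s-x))^{-1/2}\,dx$ equals $\pi$ for every $s>0$.

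One small inaccuracy: for $m\ge 2$ the function $x^{-1/2}(\ell-x)^{(m-2)/2}$ \emph{is} monotone, being a product of two positive nonincreasing functions. That monotonicity is exactly what your factor-wise comparison on $[f-1,f]$ exploits, so the argument is unaffected.
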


\begin{proof}

By (\ref{equ: Stirling's}),

\begin{equation} \label{equ: product to -1/2 power}
    \sum_{\substack{ f_1, \dots, f_m \in \ZZ^{+}:\\ f_1 + \dots + f_m = \ell}} \ \prod_{i=1}^{m}\frac{f_i^{f_i}}{f_i!}  \leq e^{\ell}(2 \pi)^{-m/2} \sum_{\substack{ f_1, \dots, f_m \in \ZZ^{+}:\\ f_1 + \dots + f_m = \ell}} \ \prod_{i=1}^{m}f_i^{-1/2}.
\end{equation}

If $m = 1$, then the right side of (\ref{equ: product to -1/2 power}) is just $e^{\ell} (2\pi)^{-1/2} \ell^{-1/2}$; since $\Gamma(1/2) = \sqrt{\pi}$ (see (i) above), we are done.\\

Next, we consider $m = 2$. Since $(j(\ell-j))^{-1/2}$ is decreasing in $j$ when $0 < j < \ell/2$ and increasing in $j$ when $\ell/2 < j < \ell$, then the above sum can be treated as a lower Riemann Sum for the corresponding integral between $0$ and $\ell$ (which can be solved with the substitution $x = \frac{\ell}{2} (1 + \sin \theta )$):

\begin{equation}
\sum_{\substack{ f_1, f_2 \in \ZZ^{+}:\\ f_1 + f_2 = \ell}} \ \prod_{i=1}^{2}f_i^{-1/2} = \sum_{j=1}^{\ell-1} (j(\ell-j))^{-1/2} \leq \int_{0}^{\ell} (x(\ell-x))^{-1/2} dx = \pi, \label{equ: pi integral}
\end{equation}

which resolves the case $m = 2$ using (\ref{equ: product to -1/2 power}) and $\Gamma(1) = 1$. Finally, let $m \geq 3$. Using (\ref{equ: pi integral}) again:

\begin{align*}
    \sum_{\substack{ f_1, \dots, f_m \in \ZZ^{+}:\\ f_1 + \dots + f_m = \ell}} \ \prod_{i=1}^{m}f_i^{-1/2} &= \sum_{\ell' = m-2}^{\ell-2} \left(\sum_{\substack{ f_1, \dots, f_{m-2} \in \ZZ^{+}:\\ f_1 + \dots + f_m = \ell'}}\prod_{i=1}^{m-2}f_i^{-1/2}\right)\left(\sum_{\substack{ f_{m-1},f_{m-2} \in \ZZ^{+}:\\ f_{m-1} + f_m = \ell - \ell'}}\prod_{i=m-1}^{m}f_i^{-1/2}\right) \\& \leq
    \pi \sum_{\ell' = m-2}^{\ell-2} \ \sum_{\substack{ f_1, \dots, f_{m-2} \in \ZZ^{+}:\\ f_1 + \dots + f_m = \ell'}} \ \prod_{i=1}^{m-2}f_i^{-1/2} \\&\leq
    \pi \sum_{\substack{ f_1, \dots, f_{m-2} \in \ZZ^{+}:\\ f_1 + \dots + f_m \leq \ell}} \ \prod_{i=1}^{m-2}f_i^{-1/2}.
\end{align*}

The expression $\prod_{i=1}^{m-2}f_i^{-1/2}$ is decreasing with respect to all $f_i$, therefore the final sum is a lower Riemann sum for the following $m-2$-dimensional integral:

\begin{align*}
\pi \sum_{\substack{ f_1, \dots, f_{m-2} \in \ZZ^{+}:\\ f_1 + \dots + f_m \leq \ell}} \ \prod_{i=1}^{m-2}f_i^{-1/2} \leq &\pi 
\int\limits_{\substack{x_i \geq 0 \\
    x_1 + \dots + x_{m-2} \leq \ell}} \left(\prod_{i=1}^{m-2} x_i^{-1/2}\right) d x_{m-2}\dots dx_{1} \\= & \pi \int\limits_{\substack{y_i \geq 0 \\
    y_1^2 + \dots + y_{m-2}^2 \leq \ell}} 2^{m-2} d y_{m-2}\dots dy_{1} \qquad \text{(substituting $x_i$ with $y_i^2$)} \\= & \pi \int\limits_{\substack{
    y_1^2 + \dots + y_{m-2}^2 \leq \ell}} 1 d y_{m-2}\dots dy_{1}.
\end{align*}

The region of the last integral is simply a ball with radius $\sqrt{\ell}$ in $\RR^{m-2}$, so the last expression equals $\pi \cdot \frac{(\ell\pi)^{(m-2)/2}}{\Gamma(m/2)}$ by (\ref{equ: ball hypervolume}). Therefore, by (\ref{equ: product to -1/2 power}):

\begin{equation*}
    \sum_{\substack{ f_1, \dots, f_m \in \ZZ^{+}:\\ f_1 + \dots + f_m = \ell}} \ \prod_{i=1}^{m}\frac{f_i^{f_i}}{f_i!} \leq 
    e^{\ell}(2 \pi)^{-m/2} \pi  \left(\frac{(\ell\pi)^{(m-2)/2}}{\Gamma(m/2)}\right) =
    \frac{e^{\ell} \ell^{(m-2)/2}}{2^{m/2} \Gamma(m/2)}. 
\end{equation*}
\end{proof}

\end{document}